\numberwithin{equation}{section}
\DeclareFontFamily{OT1}{rsfs}{}
\DeclareFontShape{OT1}{rsfs}{n}{it}{<-> rsfs10}{}
\DeclareMathAlphabet{\mathscr}{OT1}{rsfs}{n}{it}
\theoremstyle{plain}
\newtheorem{theorem}{Theorem}[section]
\newtheorem{lemma}[theorem]{Lemma}
\theoremstyle{definition}
\newtheorem{remark}[theorem]{Remark}
\newcommand\R{\mathbb{R}}
\newcommand\C{\mathbb{C}}
\newcommand{\on}{\operatorname}
\begin{document}
	
	\title{If $\sum_n n! c_n z^n$ is entire and $c_n$ does not terminate, then $\sum_n c_n z^n$ has infinitely many zeros}
	
	\author{Alann Rosas}
	\address{UC Irvine Department of Mathematics, Irvine, CA 92617.}
	\email{alannr@uci.edu}
	
	
	\subjclass[2010]{30D20}
	
	\begin{abstract} We prove that if $\sum_n n! c_n z^n$ is entire and $c_n$ does not terminate, then $\sum_n c_n z^n$ has infinitely many zeros. We then use this result to give alternative proofs that the Le Roy functions $f_r(z)=\sum_{n=0}^\infty \frac{z^n}{(n!)^r}$ for $r>1$ and Bessel functions $J_\alpha(z)=\sum_{m=0}^\infty \frac{(-1)^m}{m!\Gamma(m+\alpha+1)}\left(\frac{z}{2}\right)^{2m+\alpha}$ for $\alpha\in\R$ have infinitely many zeros.
	\end{abstract}

    \keywords{Entire functions, complex analysis, infinitely many zeros, Maclaurin coefficients}
	
	\maketitle


    \section{Main Result}
    The following theorem is the main result of this paper:
    \begin{theorem}\label{thm1.1}
        Suppose $\sum_{n=0}^\infty n!c_n z^n$ is entire and the coefficients $c_n$ do not terminate. Then the function $f(z)=\sum_{n=0}^\infty c_n z^n$ is entire and has infinitely many zeros.
    \end{theorem}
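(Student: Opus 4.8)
The plan is to separate the two assertions. That $f$ is entire is quick: since $g(z)=\sum_n n!c_nz^n$ is entire, its radius of convergence is infinite, so $|n!c_n|^{1/n}\to 0$; as $(n!)^{1/n}\ge 1$ we get $|c_n|^{1/n}\le|n!c_n|^{1/n}\to 0$, hence $f$ is entire as well. The real content is the infinitude of zeros, and I would attack it by controlling the order of growth of $f$ and then invoking Hadamard's factorization theorem.

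First I would pin down the order $\rho_f$ of $f$. Writing the order through the coefficients via $\rho_f=\limsup_n \frac{n\log n}{-\log|c_n|}$, I note that $|n!c_n|\to 0$ forces $|c_n|\le 1/n!$ for all large $n$ with $c_n\neq 0$, so that $-\log|c_n|\ge \log(n!)\sim n\log n$. Consequently $\rho_f\le 1$: the function $f$ has finite order, at most one. This bound is exactly what is needed to constrain the shape of $f$ in the event that it has few zeros.

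Then I would argue by contradiction. Suppose $f$ has only finitely many zeros. Being entire of order at most $1$, Hadamard's factorization theorem forces
\[
f(z)=P(z)e^{\alpha z}
\]
for some polynomial $P$ of degree $d$ (absorbing the finitely many zeros) and some constant $\alpha$. If $\alpha=0$ then $f=P$ is a polynomial and the $c_n$ terminate, contradicting the hypothesis. If $\alpha\neq 0$, I would expand the product to read off $c_n=\sum_{j=0}^{d}p_j\,\alpha^{\,n-j}/(n-j)!$, so that $n!c_n=\sum_{j=0}^{d}p_j\,\alpha^{\,n-j}\,\frac{n!}{(n-j)!}$. The $j=d$ term dominates, giving $n!c_n\sim p_d\,\alpha^{\,n-d}n^{d}$ and therefore $|n!c_n|^{1/n}\to|\alpha|\neq 0$. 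But then $g$ has finite radius of convergence $1/|\alpha|$, contradicting that $g$ is entire.

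The main obstacle is the order estimate combined with Hadamard's theorem: it is precisely the bound $\rho_f\le 1$ that collapses the a priori factor $e^{Q(z)}$ into the linear-exponent form $e^{\alpha z}$, from which the coefficient asymptotics of $n!c_n$ can be extracted cleanly. Once the factorization is available, the leading-term computation and the resulting growth contradiction for $g$ are routine.
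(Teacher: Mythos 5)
Your proposal is correct and follows essentially the same route as the paper: both show $f$ is entire of order at most $1$, assume finitely many zeros and apply Hadamard's factorization to get $f(z)=P(z)e^{\alpha z}$ with $\alpha\neq 0$ (the case $\alpha=0$ being excluded because the coefficients would terminate), and reach a contradiction by showing $n!c_n$ behaves like $\alpha^n$ times a degree-$d$ polynomial in $n$, forcing $\sum_n n!c_nz^n$ to have finite radius of convergence $1/|\alpha|$. The only differences are technical choices within the same argument: you bound the order via the coefficient formula $\limsup_n \frac{n\log n}{-\log|c_n|}$ and conclude with the Cauchy--Hadamard root formula, whereas the paper uses the direct estimate $|f(z)|\le Me^{2|z|/\delta}$, extracts the coefficient asymptotics from the Leibniz rule applied to $f^{(n)}(0)$, and finishes with the ratio test.
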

    This gives a simple method for determining if a power series $\sum_n c_n z^n$ has infinitely many zeros. To illustrate its applicability, we give in Section \ref{sec3} alternative proofs that the Le Roy functions\footnote{For more information on the Le Roy functions, the reader is invited to have a look at \cite{le-roy}}
    $$f_r(z)=\sum_{n=0}^\infty \frac{z^n}{(n!)^r},\text{ }r>1$$
    and Bessel functions
    $$J_\alpha(z)=\sum_{m=0}^\infty \frac{(-1)^m}{m!\Gamma(m+\alpha+1)}\left(\frac{z}{2}\right)^{2m+\alpha}$$
    have infinitely many zeros.

    To prove Theorem \ref{thm1.1}, we will need two lemmas:
    \begin{lemma}\label{lem1.2}
        Let $f$ be an entire function of order $1$ with finitely many zeros. Let $N$ be the number of zeros counting multiplicity. Then there exists a constant $k\neq 0$ and a degree-$N$ polynomial $Q(n)$ (depending on $k$) such that for $n>N$, we have $f^{(n)}(0)=k^nQ(n)$.
    \end{lemma}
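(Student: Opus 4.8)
The plan is to use the Hadamard factorization theorem to pin down the precise analytic shape of $f$, and then read off $f^{(n)}(0)$ by the Leibniz rule. Since $f$ has order $1$ and finitely many zeros, counted with multiplicity there are $N$ of them, and Hadamard's theorem gives
$$ f(z) = e^{g(z)} P(z), $$
where $P(z)=\prod_j (z-a_j)$ is a polynomial of degree $N$ vanishing exactly at the zeros of $f$ (with multiplicity) and $g$ is a polynomial of degree at most $1$. The order of $e^{g}P$ equals $\deg g$; since the order is \emph{exactly} $1$, we must have $\deg g = 1$, say $g(z)=kz+b$. Here $k\neq 0$, for otherwise $f$ would be a polynomial and hence of order $0$. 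Writing $c=e^{b}\neq 0$, we obtain $f(z)=c\,e^{kz}P(z)$.

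Next I would differentiate using the Leibniz rule. Since $(e^{kz})^{(m)}=k^{m}e^{kz}$,
$$ f^{(n)}(z) = c\sum_{j=0}^{n}\binom{n}{j}k^{\,n-j}e^{kz}P^{(j)}(z), $$
and evaluating at $z=0$ yields
$$ f^{(n)}(0) = c\sum_{j=0}^{n}\binom{n}{j}k^{\,n-j}P^{(j)}(0). $$
Because $\deg P = N$, every term with $j>N$ vanishes, so for $n>N$ the sum truncates at $j=N$ and we may factor out $k^{n}$:
$$ f^{(n)}(0) = k^{n}\,\underbrace{\left(c\sum_{j=0}^{N}\binom{n}{j}k^{-j}P^{(j)}(0)\right)}_{=:\,Q(n)}. $$

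Finally I would verify that $Q$ is genuinely a polynomial in $n$ of degree exactly $N$. Each $\binom{n}{j}=\tfrac{n(n-1)\cdots(n-j+1)}{j!}$ is a polynomial in $n$ of degree $j$, so $Q(n)$ is a polynomial of degree at most $N$; its coefficient of $n^{N}$ comes only from the $j=N$ term, namely $c\,k^{-N}P^{(N)}(0)/N! = c\,k^{-N}a_{N}$, where $a_{N}\neq 0$ is the leading coefficient of $P$. Hence $\deg Q = N$, which is what is claimed, with the same $k$ appearing in both the factor $k^{n}$ and the coefficients of $Q$.

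I expect the only real subtlety to lie in the correct invocation of Hadamard factorization and, within it, the verification that $k\neq 0$: this is exactly the step where the hypothesis of order \emph{equal to} $1$ (rather than merely $\le 1$) is needed, since it rules out the degenerate case in which $f$ is a polynomial. Everything after establishing $f(z)=c\,e^{kz}P(z)$ is a routine Leibniz computation together with a degree count.
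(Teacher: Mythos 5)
Your proof is correct and follows essentially the same route as the paper: Hadamard factorization to get $f(z)=c\,e^{kz}P(z)$ with $k\neq 0$, then the Leibniz rule, truncation of the sum at $j=N$ for $n>N$, and a degree count via the $j=N$ term to see $\deg Q = N$. If anything, your explicit justification that $\deg g = 1$ (hence $k\neq 0$) from the order being exactly $1$ is spelled out slightly more carefully than in the paper's version.
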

    \begin{lemma}\label{lem1.3}
        Suppose that $\sum_{n=0}^\infty n!c_n z^n$ converges on some neighborhood of $0$. Then $f(z)=\sum_{n=0}^\infty c_n z^n$ is entire and has order at most $1$.
    \end{lemma}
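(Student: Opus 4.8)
The plan is to convert the convergence of $\sum_n n! c_n z^n$ near the origin into an exponential growth bound of the form $|f(z)| \le C e^{M|z|}$, from which order at most $1$ is immediate.

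First I would extract a uniform coefficient bound. Convergence of $\sum_n n! c_n z^n$ on some disc $|z| < \rho$ with $\rho > 0$ forces its radius of convergence $R$ to be positive, so $\limsup_n |n! c_n|^{1/n} = 1/R < \infty$. Choosing any $M > 1/R$, I obtain $|n! c_n| \le M^n$ for all large $n$, and after enlarging the constant to cover the finitely many remaining indices, $|n! c_n| \le C M^n$ for all $n$ and some $C > 0$. Dividing through by $n!$ gives the key estimate $|c_n| \le C M^n / n!$.

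With this bound I would show $f$ is entire of order at most $1$ in essentially one stroke. For any $z \in \C$,
$$\sum_{n=0}^\infty |c_n| |z|^n \le C \sum_{n=0}^\infty \frac{(M|z|)^n}{n!} = C e^{M|z|},$$
so the series defining $f$ converges absolutely throughout $\C$, making $f$ entire, and moreover $M_f(r) := \max_{|z| = r} |f(z)| \le C e^{Mr}$. Feeding this into the definition of order, $\rho = \limsup_{r \to \infty} \frac{\log\log M_f(r)}{\log r}$, and using $\log\log M_f(r) \le \log(\log C + Mr)$, whose ratio to $\log r$ tends to $1$, I conclude $\rho \le 1$.

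The argument is elementary and I do not anticipate a genuine obstacle. The only step requiring care is the passage from convergence on a neighborhood to the uniform bound $|n! c_n| \le C M^n$: mere convergence at a single radius does not by itself control the individual coefficients, so one must invoke the $\limsup$ characterization of the radius of convergence. Everything that follows is an immediate consequence of the exponential majorant $C e^{M|z|}$.
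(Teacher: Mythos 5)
Your proposal is correct and follows essentially the same route as the paper: extract a geometric bound $|n!c_n|\le CM^n$, divide by $n!$, and majorize $f$ by $Ce^{M|z|}$. The only (cosmetic) difference is in how the coefficient bound is obtained --- you invoke the Cauchy--Hadamard $\limsup$ formula, while the paper simply evaluates the series at $z=\delta/2$ and uses that the terms of a convergent series are bounded; note this also shows your closing caveat is mistaken, since convergence at a single point $z_0\neq 0$ \emph{does} control the coefficients via $|n!c_n||z_0|^n\to 0$.
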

    Before proving these lemmas, let us show how they can be applied to give the proof of Theorem \ref{thm1.1}:

    \begin{proof}
        Since $\sum_{n=0}^\infty n!c_n z^n$ is entire and \textit{a fortiori} convergent on some neighborhood of $0$, we have by Lemma \ref{lem1.3} that $f$ is entire and has order at most $1$. Suppose, for the sake of reaching a contradiction, that $f$ has \textit{finitely many} zeros. Then the Hadamard factorization of $f$ is
        $$f(z)=e^{A(z)}P(z)$$
        where $A(z)$ and $P(z)$ are polynomials with $A(z)$ either constant or linear. Note that $A(z)$ cannot be constant, otherwise $f$ is a polynomial and hence $c_n=f^{(n)}(0)/n!$ terminates, contradicting our assumption. Thus, $A(z)=l+kz$ where $k\neq 0$, and hence
        $$f(z)=e^{l+kz}P(z)=e^{kz}\tilde{P}(z)$$
        where $\tilde{P}(z):=e^l P(z)$ is a polynomial. Let $N$ be the degree of $\tilde P$, so $N$ is also the number of zeros of $f$ counting multiplicity. By Lemma \ref{lem1.2}, there is a degree-$N$ polynomial $Q(n)$ such that $f^{(n)}(0)=k^n Q(n)$ holds for all $n>N$, so we can write
        $$\sum_{n=N+1}^\infty n!c_n z^n = \sum_{n=N+1}^\infty f^{(n)}(0) z^n=\sum_{n=N+1}^\infty Q(n) (kz)^n$$
        Noting that $Q(n)$ is a polynomial and thus $Q(n)\neq 0$ for all $n$ sufficiently large, we may determine the convergence radius of $\sum_{n=N+1}^\infty Q(n) (kz)^n$ using the ratio test:
        $$\lim_{n\to\infty}\left|\frac{Q(n+1) (kz)^{n+1}}{Q(n) (kz)^n}\right|=|kz|\lim_{n\to\infty}\left|\frac{Q(n+1)}{Q(n)}\right|=|k||z|$$
        To get the last equality, we used $\lim_{n\to\infty}\frac{Q(n+1)}{Q(n)}=1$, which holds because $Q(n+1)$ and $Q(n)$ have the same leading term and coefficient.
        
        From the above, we see that $\sum_{n=N+1}^\infty n! c_n z^n$ diverges whenever $|z|>1/|k|$. But the whole series $\sum_{n=0}^\infty n! c_n z^n$ is entire by hypothesis, so the same must true of the tail $\sum_{n=N+1}^\infty n!c_n z^n$. We have reached a contradiction, so $f$ must have infinitely many zeros.
    \end{proof}
    \begin{remark}
        Since the contradiction obtained in the proof of Theorem \ref{thm1.1} can be reached once we know $\sum_n n! c_n z^n$ converges at some $z_0\in\C$ with $|z_0|>1/|k|$, it is natural to ask whether we can determine the constant $k$ \textit{a priori}, that is, directly from the coefficients $c_n$. This would let us weaken the assumption that $\sum_n n! c_n z^n$ is entire to merely that it has radius of convergence $R>1/|k|$.
        
        Unfortunately, the utility of such a weakening is limited because Theorem \ref{thm1.1} can fail if $\sum_{n=0}^\infty n! c_n z^n$ is not entire. For any possible radius of convergence $0<R<\infty$, consider the series $\sum_{n=0}^\infty n! c_n z^n$ with the choice of Maclaurin coefficients
        $$c_n=\frac{1}{n! R^n}$$
        We can see that $c_n$ never terminates and the series
        $$\sum_{n=0}^\infty n! c_n z^n=\sum_{n=0}^\infty \frac{z^n}{R^n}$$
        has radius of convergence equal to $R$, so it is not entire. However, the corresponding $f$ is
        $$f(z)=\sum_{n=0}^\infty c_n z^n =\sum_{n=0}^\infty \frac{(z/R)^n}{n!}=e^{\frac{z}{R}}$$
        which does not have any zeros.
    \end{remark}
    \section{Proofs of Lemmas 1.2 and 1.3}
    In this section, we prove Lemmas \ref{lem1.2} and \ref{lem1.3}. We first prove \ref{lem1.2}, which is an easy consequence of the Hadamard factorization theorem and the generalized product rule
    $$(fg)^{(n)}(a)=\sum_{j=0}^n {\binom n j}f^{(j)}(a)g^{(n-j)}(a)$$
    for functions $f,g$ that are each $n$-times differentiable at $a$.
    \begin{proof}
        Since $f$ has order $1$ and $N$ zeros counting multiplicity, the Hadamard factorization of $f$ is $f(z)=e^{kz}P(z)$ for some constant $k\neq 0$, where $P(z)$ is a degree-$N$ polynomial. From the generalized Leibniz product rule,
        $$f^{(n)}(0)=\sum_{j=0}^n {\binom n j}k^{n-j}P^{(j)}(0)=k^n\sum_{j=0}^n {\binom n j}\frac{P^{(j)}(0)}{k^j}$$
        If $n>N$, we have $P^{(j)}(0)=0$ since $P$ is a polynomial of degree $N$, so
        $$f^{(n)}(0)=k^n\sum_{j=0}^N {\binom n j}\frac{P^{(j)}(0)}{k^j}\text{ for $n>N$}$$
        Write
        $$P(z)=\sum_{k=0}^N a_k z^k, a_k\in\C$$
        From the formula $a_j=\frac{P^{(j)}(0)}{j!}$ for the Maclaurin coefficients, we see that $P^{(j)}(0)=j!a_j$, so
        \begin{align*}
            f^{(n)}(0) &= k^n\sum_{j=0}^N {\binom n j}\frac{j!a_j}{k^j}\\
            &= k^n\sum_{j=0}^N \frac{n!}{(n-j)!}\cdot\frac{a_j}{k^j}\\
            &= k^n\sum_{j=0}^N \frac{a_j}{k^j}n(n-1)(n-2)\cdots(n-(j-1))
        \end{align*}
        The expressions
        $$\frac{a_j}{k^j}n(n-1)(n-2)\cdots(n-(j-1))$$
        are either $0$ or a polynomial (in $n$) of degree $j$, depending on whether $a_j=0$ or $a_j\neq 0$. Since $a_N\neq 0$ because $P$ has degree $N$, and since
        $$Q(n):=\sum_{j=0}^N \frac{a_j}{k^j}n(n-1)(n-2)\cdots(n-(j-1))$$
        is a finite sum of the above expressions, it must be a degree $N$ polynomial. This completes the proof of Lemma \ref{lem1.2}
    \end{proof}
    We now prove Lemma \ref{lem1.3}.
    \begin{proof}
        Suppose $\sum_{n=0}^\infty n!c_n z^n$ converges for $|z|<\delta$, where $\delta>0$. Evaluating at $z=\frac{\delta}{2}$ gives the convergent series $\sum_{n=0}^\infty n!c_n \left(\frac{\delta}{2}\right)^n$, so the terms $n!c_n \left(\frac{\delta}{2}\right)^n$ go to $0$ and are thus bounded. Suppose $M>0$ is such that $\left|n!c_n \left(\frac{\delta}{2}\right)^n\right|\leq M$. Then for all $z\in\C$,
        $$|c_nz^n|\leq \frac{M|z|^n\left(\frac{2}{\delta}\right)^n}{n!}=\frac{M\left(\frac{2|z|}{\delta}\right)^n}{n!}$$
        Summing the right side over $n\geq 0$ gives a series which converges to $Me^{2|z|/\delta}$, so $f$ is entire by comparison. We also see that
        \begin{equation}\label{ineq1.1}
            |f(z)|\leq \sum_{n=0}^\infty |c_n z^n|\leq M\sum_{n=0}^\infty\frac{\left(\frac{2|z|}{\delta}\right)^n}{n!}=Me^{2|z|/\delta}
        \end{equation}
        so $f$ has order at most $1$.
    \end{proof}
    \section{Applications of Theorem 1.1}\label{sec3}
   As promised, we now give alternative proofs that the Le Roy functions
   $$f_r(z)=\sum_{n=0}^\infty \frac{z^n}{(n!)^r}, r>1$$
   and Bessel functions
   $$J_\alpha(z)=\sum_{m=0}^\infty \frac{(-1)^m}{m!\Gamma(m+\alpha+1)}\left(\frac{z}{2}\right)^{2m+\alpha}$$
   have infinitely many zeros.
   For the Le Roy functions, we obtain a slightly stronger result: there are infinitely many zeros if $\on{Re}(r)>1$.
    \begin{theorem}
        If $\on{Re}(r)>1$, the function $f_r(z)=\sum_{n=0}^\infty \frac{z^n}{(n!)^r}$ is entire and has infinitely many zeros.
    \end{theorem}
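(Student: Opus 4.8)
The plan is to apply Theorem \ref{thm1.1} directly, taking $c_n = 1/(n!)^r$, where for complex $r$ we interpret $(n!)^r := \exp(r\log n!)$ using the real logarithm of the positive integer $n!$. Only two things need checking: that the coefficients $c_n$ do not terminate, and that $\sum_{n} n! c_n z^n$ is entire. The non-termination is immediate, since $c_n = 1/(n!)^r \neq 0$ for every $n$. (Note that the coefficients here are complex; this is harmless, as Lemma \ref{lem1.2} and the Hadamard factorization underlying Theorem \ref{thm1.1} apply to complex-valued entire functions.)

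The only real computation is to verify that $\sum_n n! c_n z^n$ is entire. I would first observe that $n! c_n = n!/(n!)^r = 1/(n!)^{r-1}$, so that $\sum_n n! c_n z^n = \sum_n z^n/(n!)^{r-1}$; thus multiplying the coefficients by $n!$ merely lowers the exponent of $n!$ in the denominator from $r$ to $r-1$, and crucially $\on{Re}(r-1) > 0$ is still positive. To conclude entireness I would apply the root test. Writing $\rho := \on{Re}(r) - 1 > 0$, we have $|(n!)^{r-1}| = (n!)^{\rho}$, hence $|n! c_n|^{1/n} = (n!)^{-\rho/n}$. Since $(n!)^{1/n} \to \infty$ (for instance $(n!)^{1/n} \sim n/e$ by Stirling), and $\rho > 0$, it follows that $(n!)^{\rho/n} \to \infty$ and therefore $|n! c_n|^{1/n} \to 0$. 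The radius of convergence is thus infinite, so $\sum_n n! c_n z^n$ is entire.

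Having verified both hypotheses, Theorem \ref{thm1.1} immediately yields that $f_r(z) = \sum_n c_n z^n$ is entire and has infinitely many zeros, completing the proof. I expect essentially no obstacle here: the whole argument reduces to the root-test estimate above, and the condition $\on{Re}(r) > 1$ enters exactly at the point where we need $\rho > 0$ to force $(n!)^{\rho/n} \to \infty$. If anything, the only subtlety worth flagging is the convention for $(n!)^r$ with complex $r$, which is what makes $|(n!)^{r-1}| = (n!)^{\on{Re}(r)-1}$ valid and keeps the estimate clean.
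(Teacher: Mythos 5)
Your proposal is correct and follows essentially the same route as the paper: both reduce to the observation that $n!\,c_n = 1/(n!)^{r-1}$ and then invoke Theorem \ref{thm1.1}, with your root-test verification of entireness being an interchangeable substitute for the paper's ratio test. Your explicit handling of the convention $(n!)^r := \exp(r\log n!)$, giving $|(n!)^{r-1}| = (n!)^{\on{Re}(r)-1}$, is a small point of extra care that the paper leaves implicit.
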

    \begin{proof}
        By the ratio test, the series
        $$\sum_{n=0}^\infty n!\cdot \frac{z^n}{(n!)^r}=\sum_{n=0}^\infty \frac{z^n}{(n!)^{r-1}}$$
        is entire if $\on{Re}(r)>1$. The desired result follows from Theorem \ref{thm1.1}.
    \end{proof}
    We now prove that the Bessel functions have infinitely many zeros for any parameter $\alpha\in\R$.
    \begin{theorem}
        For each $\alpha\in\R$, the Bessel function
        $$J_\alpha(z)=\sum_{m=0}^\infty \frac{(-1)^m}{m!\Gamma(m+\alpha+1)}\left(\frac{z}{2}\right)^{2m+\alpha}$$
        has infinitely many zeros.
    \end{theorem}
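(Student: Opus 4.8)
The plan is to strip off the non-polynomial factor of $J_\alpha$ and reduce to a genuine power series to which Theorem~\ref{thm1.1} applies. First I would factor
$$J_\alpha(z)=\left(\frac{z}{2}\right)^\alpha g(z),\qquad g(z):=\sum_{m=0}^\infty \frac{(-1)^m}{m!\,\Gamma(m+\alpha+1)}\left(\frac{z}{2}\right)^{2m}.$$
Because $(z/2)^\alpha$ is nonzero for every $z\neq 0$, the nonzero zeros of $J_\alpha$ are precisely the zeros of $g$, so it is enough to show that $g$ has infinitely many zeros. Since $g$ is a power series in $z^2$, I would then substitute $w=z^2$ and set
$$F(w):=\sum_{m=0}^\infty c_m w^m,\qquad c_m:=\frac{(-1)^m}{m!\,\Gamma(m+\alpha+1)\,4^m},$$
so that $g(z)=F(z^2)$.

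To invoke Theorem~\ref{thm1.1} for $F$, I would check its two hypotheses. The coefficients $c_m$ do not terminate: the Gamma function has poles only at the non-positive integers, so $\Gamma(m+\alpha+1)$ is finite and nonzero for all large $m$, whence $c_m\neq 0$ for all large $m$ (if $\alpha$ is a negative integer the first few $c_m$ vanish, but the tail never does). Moreover
$$\sum_{m=0}^\infty m!\,c_m w^m=\sum_{m=0}^\infty \frac{(-1)^m}{\Gamma(m+\alpha+1)\,4^m}\,w^m$$
is entire, since the ratio of the absolute values of consecutive nonzero coefficients is $\tfrac{1}{4(m+\alpha+1)}\to 0$, giving infinite radius of convergence. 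Theorem~\ref{thm1.1} then yields that $F$ is entire and has infinitely many zeros.

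Finally I would push these zeros back to $g$ and to $J_\alpha$: if $w_1,w_2,\dots$ are distinct zeros of $F$, then for each $w_k\neq 0$ the two points $\pm\sqrt{w_k}$ are distinct nonzero zeros of $g(z)=F(z^2)$, and since at most one $w_k$ is $0$ this produces infinitely many zeros of $g$, hence of $J_\alpha$. I expect the only delicate point to be the bookkeeping around the factor $(z/2)^\alpha$ and the substitution $w=z^2$: for non-integer $\alpha$ the function $J_\alpha$ is multivalued on $\C$, so ``infinitely many zeros'' is best understood as a statement about the genuinely entire auxiliary function $g$, whose zeros are independent of any branch choice. The verification of the hypotheses of Theorem~\ref{thm1.1} is then routine.
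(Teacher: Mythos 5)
Your proposal is correct and takes essentially the same route as the paper: both factor $J_\alpha(z)=\left(\frac{z}{2}\right)^\alpha F(z^2)$ with the same coefficients $c_m=\frac{(-1)^m}{m!\,\Gamma(m+\alpha+1)4^m}$, verify that $\sum_m m!\,c_m w^m$ is entire via the ratio test, invoke Theorem \ref{thm1.1}, and pull the zeros back through square roots. If anything, you are slightly more careful than the paper, which omits the explicit check that the $c_m$ do not terminate (your point that a negative integer $\alpha$ kills only finitely many initial coefficients, via the zeros of $1/\Gamma$) and does not address the branch ambiguity of $\left(\frac{z}{2}\right)^\alpha$ for non-integer $\alpha$.
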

    \begin{proof}
        Fix $\alpha\in\R$ and define
        $$c_m:=\frac{(-1)^m}{m!\Gamma(m+\alpha+1)2^{2m}}$$
        The series
        $$\sum_{m=0}^\infty m!c_m z^m=\sum_{m=0}^\infty \frac{(-1)^m}{\Gamma(m+\alpha+1)2^{2m}}z^m$$
        is entire by the ratio test, so $g(z):=\sum_{m=0}^\infty c_m z^m$ is entire and has infinitely many zeros by Theorem \ref{thm1.1}.
    
        Now,
        \begin{align*}
            J_\alpha(z) &= \sum_{m=0}^\infty \frac{(-1)^m}{m!\Gamma(m+\alpha+1)}\left(\frac{z}{2}\right)^{2m+\alpha}\\
            &= \left(\frac{z}{2}\right)^\alpha\sum_{m=0}^\infty \frac{(-1)^m}{m!\Gamma(m+\alpha+1)2^{2m}}(z^2)^m\\
            &= \left(\frac{z}{2}\right)^\alpha g(z^2)
        \end{align*}
        Let $a_1,a_2,\dots$ be the (infinitely many) zeros of $g(z)$. Then their principal square roots
        $$\sqrt{a_j}:=|a_j|^{\frac{1}{2}}e^{\frac{i\operatorname{Arg}(a_i)}{2}}$$
        are zeros of $g(z^2)$ and therefore also of $J_\alpha (z)$. It follows that $J_\alpha(z)$ also has infinitely many zeros.
    \end{proof}
    
	\bibliographystyle{plain} 
	\bibliography{refs} 

\end{document}